\newtheorem{Theorem}{Theorem}[section]
\newtheorem{Proposition}[Theorem]{Proposition}
\newtheorem{Lemma}[Theorem]{Lemma}
\newtheorem{Corollary}[Theorem]{Corollary}
\newtheorem{Remark}[Theorem]{Remark}
\newcommand{\RR}{{{\rm I} \kern -.15em {\rm R} }}
\newcommand{\C}{{{\rm l} \kern -.42em {\rm C} }}
\newcommand{\nat}{{{\rm I} \kern -.15em {\rm N} }}
\newcommand{\be}{\begin{equation}}
\newcommand{\ee}{\end{equation}}
\newcommand{\beq}{\begin{eqnarray}}
\newcommand{\eeq}{\end{eqnarray}}
\newcommand{\beqs}{\begin{eqnarray*}}
\newcommand{\eeqs}{\end{eqnarray*}}
\newcommand{\bt}{\begin{Theorem}}
\newcommand{\et}{\end{Theorem}}
\newcommand{\br}{\begin{Remark}}
\newcommand{\er}{\end{Remark}}
\newcommand{\bc}{\begin{Corollary}}
\newcommand{\ec}{\end{Corollary}}
\newcommand{\bl}{\begin{Lemma}}
\newcommand{\el}{\end{Lemma}}
\newcommand{\bd}{\begin{definition}}
\newcommand{\ed}{\end{definition}}
\renewcommand{\geq}{\geqslant}
\renewcommand{\ge}{\geqslant}
\renewcommand{\leq}{\leqslant}
\renewcommand{\le}{\leqslant}
\newcommand{\W}{\mathcal{H}}
\title{Exponential decay estimates for semilinear wave-type \\equations with time-dependent time delay}
\author{
Cristina Pignotti\footnote{Dipartimento di Ingegneria e Scienze dell'Informazione e Matematica, Universit\`{a} di L'Aquila, Via Vetoio, Loc. Coppito, 67100 L'Aquila Italy (\texttt{cristina.pignotti@univaq.it}).}
}
\date{}
\begin{document}

\textwidth=160 mm

\textheight=225mm

\parindent=8mm

\frenchspacing

\maketitle

\begin{abstract}
In this paper we  analyze a semilinear  damped second order evolution equation with time-dependent time delay and time-dependent delay feedback coefficient. The nonlinear term satisfies a local Lipschitz continuity assumption.  Under appropriate conditions, we prove well-posedness and exponential stability of our model for \emph{small} initial data. Our arguments combine a  Lyapunov functional approach with some continuity arguments. Moreover, as an application of our abstract results, the damped wave equation with a source term and delay feedback is analyzed.
\end{abstract}

\vspace{5 mm}

\def\qed{\hbox{\hskip 6pt\vrule width6pt
height7pt
depth1pt  \hskip1pt}\bigskip}


 {\bf Keywords and Phrases:}  semilinear wave equations, stability estimates, time delay.

\section{Introduction}
In recent years the study of evolution equations with time delay effects attracted the interest of several researchers. Often, in mathematical models, a time delay has to be included in order to take into account some lags present in real phenomena such as reaction times,  gestation times, times for maturation, etc.
Concerning wave-type equations, it is well-known that time delays can generate instability phenomena (see e.g.  \cite{Datko, NPSicon06}). 
 However, it is also now well-known that appropriate feedback laws can restitute the stability properties of the undelayed model (see \cite{NPSicon06, XYL}).
Here, we are interested in proving exponential stability estimates for second-order semilinear damped evolution equations in presence of a time delay feedback. In particular, we are interested in time-variable time delays, namely the time delay is not a fixed constant but it depends upon the time variable.   

Let $H$ be a Hilbert space and let $A:{\mathcal D}(A)\subset H \to H$ be a positive self-adjoint operator with dense domain and compact inverse in $H.$ Let us consider the following wave-type equation:
\begin{equation}\label{modello}
\begin{array}{l}
\displaystyle{u_{tt}(t)+A u(t)+CC^*u_t(t)+k(t)BB^* u_t(t-\tau(t))=\nabla \psi(u(t)),\quad t\geq 0,}\\
\displaystyle{u(0)=u_0, \quad u_t(0)=u_1,}\\ 
\displaystyle{B^*u_t(s)=g(s), \quad s\in [-\overline\tau,0], }
\end{array}
\end{equation}
where $\tau: [0, +\infty)\rightarrow [0,+\infty)$ is the time delay function, i.e. a continuous function which  satisfies
\begin{equation}\label{taubar}
\tau(t)\le\overline\tau.
\end{equation}
 The damping coefficients $k(\cdot)$ is assumed to  belong to  $L^1_{loc}([-\overline\tau,+\infty)),$ and $(u_0, u_1, g)$ are the initial data taken in suitable spaces. Moreover, for given real Hilbert spaces $W_1$ and $W_2$ that  will be identified with their dual spaces,  $C:W_1\to H$ and $B:W_2\to H$ are bounded linear operators.   We will denote    $C^*$ and  $B^*$ the adjoint of $C$ and $B$ respectively.  
We assume that the damping operator $CC^*$ satisfies a control geometric property (see e.g. \cite{Bardos} or \cite[Chapter 5]{K}). Moreover, on the delay feedback coefficient, we assume that the integrals on intervals of length $\overline\tau$ are uniformly bounded, namely, 
\begin{equation}\label{damp_coeff}
\int_{t-\overline{\tau}}^t |k(s)|ds\leq K, \quad \forall \ t\ge \overline 0,
\end{equation}
for some $K>0.$

We are interested in studying the well-posedness of the above system and in proving an exponential stability result for solutions corresponding to enough small initial data.

Problem \eqref{modello} has been recently studied in \cite{PP_DCDSS} in the case of a constant time delay. We extend here such an analysis to the case of a time variable time delay. The extension is nontrivial since several difficulties have to be overcome in order to deal with a time dependent function $\tau.$ First, several arguments based on the so-called step by   
step procedure, typical to approach time delay models, cannot be reproduced; second, the time delay can also degenerate and, when this happens, namely when $\tau(t)=0,$ our model is not more delayed. We have to take into account these different features in our arguments.
However, we are able to extend the results of \cite{PP_DCDSS} to the new setting.
Moreover, our arguments, allow us to improve also the results in the case of a constant time delay, giving a proof simpler. 

A linear version of such a model, in case of a constant time delay,  has been first studied in \cite{SCL12},  where a wave equation with frictional damping and delay feedback with a constant coefficient has been analyzed. Under a suitable smallness condition on the delay term coefficient, an exponential decay estimate has been proven. This result has then  been extended to linear  wave equations with boundary dissipative condition (see \cite{ANP10}) and with viscoelastic damping (see \cite{AlNP, guesmia}). We quote also  \cite{JEE15, NicaisePignotti18, KP} for related stability results for abstract semilinear evolution equations. However,   in the nonlinear setting, the results previously obtained require that the damping operator $CC^*$ contrasts, in the spirit of \cite{NPSicon06} (cf. also \cite{XYL}), the delay feedback. Indeed, in \cite{JEE15, NicaisePignotti18}, where the delay coefficient is constant, i.e. $k(t)\equiv k,$  in order to have a not increasing energy, it is assumed $\vert k\vert<\frac 1 {\mu}$
and
$$ \Vert B^*u\Vert_{W_2}\le \mu  \Vert C^*u\Vert_{W_1}, \quad \forall u\in {\mathcal D}(A^{\frac 12}). $$
In \cite{KP} the coefficient $k$ is time dependent, as here, but it is assumed
that 
$$
k(t)=k_1(t)+k_2(t),
$$
with  $k_1\in L^1([0,+\infty))$, $k_2\in L^\infty ([0,+\infty))$, and
$||k_2||_\infty$ smaller than a suitable constant depending on the damping operator $CC^*.$

The time-dependent time delay case has been already considered in \cite{KP}. However,  here we work in a more general setting. Indeed, there, in addition to \eqref{taubar}, we assumed that the time delay function belongs to
$W^{1,\infty}(0, +\infty)$ and that
$\tau^\prime (t)\le c<1.$ 
These are the classical assumptions, commonly used to deal with wave-type equations with a time variable time delay (see e.g. \cite{NPV11, ChentoufMansouri, Feng} ). On the contrary, here, we only assume that the time delay is a continuous function bounded from above. Therefore, our results significantly improve previous related literature. 

Stability results in the  presence of time delay feedbacks have also been obtained for specific models, with  $k$ constant, mainly in the linear setting (see e.g. \cite{Ait, AG, AM, Chentouf, Dai, Oquendo, Said}). For recent results about Korteweg-de Vries-Burgers and  higher-order dispersive equations
with time delay see \cite{KP2} and \cite{Capistrano} respectively. We also refer to \cite{Ma} for a recent analysis of nonlinear damped Lam\'e systems with delay. 

The rest of the paper is organized as follows. In section \ref{prel} we precise our assumptions and rewrite system \eqref{modello} in an abstract form. Moreover, we give some preliminary estimates. In section \ref{exp}, first we give an exponential stability result under an appropriate well-posedness assumption; then, we show that the well-posedness assumption is satisfied for model \eqref{modello} and so the exponential decay estimate holds for {\em small} initial data. Section \ref{Examples} is devoted, as a concrete example for which the abstract theory is applicable, to the damped wave equation with a nonlinear source.
\section{Assumptions and preliminary estimates}
\label{prel} \hspace{5mm}

\setcounter{equation}{0}

In this section, we give some preliminary estimates which will be useful to prove our well-posedness and exponential stability results. First, we precise the assumptions on the nonlinear term of model \eqref{modello}.

Let $\psi : {\mathcal D}(A^{\frac 1 2})\rightarrow \RR$ be a functional having G\^{a}teaux derivative $D\psi(u)$ at every $u\in {\mathcal D}(A^{\frac 12}).$ In the spirit of \cite{ACS}, we assume the following hypotheses:
\begin{itemize}
\item[{(H1)}] For every $u\in {\mathcal D}(A^{\frac 1 2})$, there exists a constant $c(u)>0$ such that
$$
|D\psi(u)(v)|\leq c(u) ||v||_{{H}} \qquad \forall v\in {\mathcal D}(A^{\frac 1 2}).
$$
Then, $\psi$ can be extended to the whole  $H$ and
 we denote by $\nabla \psi(u)$ the unique vector representing $D\psi(u)$ in the Riesz isomorphism, i.e.
$$
\langle \nabla \psi(u), v \rangle_H =D\psi(u) (v), \qquad \forall v\in H;
$$
\item[ (H2)] for all $r>0$ there exists a constant $L(r)>0$ such that
$$
||\nabla \psi (u)-\nabla \psi (v)||_H \leq L(r) ||A^{\frac 12}(u-v)||_H,
$$
for all $u,v\in {\mathcal D}(A^{\frac 12})$ satisfying $||A^{\frac 12} u||_H\leq r$ and $||A^{\frac 12} v||_H\leq r$.
\item[{ (H3)}] $\psi(0)=0,$  $\nabla \psi(0)=0$ and
there exists a strictly increasing continuous function $h$ such that
\begin{equation}
\label{stima_h}
||\nabla \psi (u)||_H\leq h(||A^{\frac 12} u||_H)||A^{\frac 12}u||_H,
\end{equation}
for all $u\in {\mathcal D}(A^{\frac 12})$.
\end{itemize}

Now, we want to reformulate model \eqref{modello} in an abstract way. Let us introduce the Hilbert space
$$
{\mathcal H}=\mathcal{D}(A^{\frac 12}) \times H,
$$
endowed with the inner product
$$
\left\langle
\left (
\begin{array}{l}
u\\
v
\end{array}
\right ),
\left (
\begin{array}{l}
\tilde u\\
\tilde v
\end{array}
\right )
\right\rangle_{\W}:= \langle A^{\frac 12}u, A^{\frac 12} \tilde u\rangle_H+\langle v, \tilde v\rangle_H.
$$
If we denote $v(t)=u_t(t)$ and $U(t):=(u(t),v(t))^T$, we can rewrite system \eqref{modello} in the following abstract form
\begin{equation}\label{abstract_form}
\begin{array}{l}
\displaystyle{U'(t)=\mathcal AU(t)- k(t)\mathcal BU(t-\tau(t))+F(U(t)),}\\
\displaystyle{U(0)=U_0,}\\
\displaystyle{\mathcal BU(t)=f(t), \quad t\in[-\overline\tau,0],}
\end{array}
\end{equation}
where 
$$
\mathcal A=
\begin{pmatrix}
0 & 1 \\ 
-A & -CC^*
\end{pmatrix}, \quad 
\mathcal BU(t)= \left( \begin{array}{l} \hspace{0.7 cm}0 \\ BB^* v(t)\end{array} \right) \quad \text{and} \quad F(U(t))=\left( \begin{array}{l} \hspace{0.65 cm} 0 \\ \nabla \psi(u(t)) \end{array}\right).
$$
We know that, under controllability assumptions on the  damping operator $CC^*$ (see for instance \cite{Bardos, K}), $\mathcal A$ generates an exponentially stable $C_0$-semigroup $\{ S(t)\}_{t\geq 0}$, namely there exist $M,\omega >0$ such that
\begin{equation}\label{decay_semigroup}
||S(t)||_{\mathcal L(\mathcal H)} \leq Me^{-\omega t}, \quad \forall t\geq 0.
\end{equation}
Moreover, the previous hypotheses (H2)-(H3) on $\psi$ imply the following properties on $F$:
\begin{itemize}
\item[(F1)] $F(0)=0$;
\item[(F2)] for any $r>0$ there exists a constant $L(r)>0$ such that
$$
||F(U)-F(V)||_\W\leq L(r) ||U-V||_\W,
$$
whenever $||U||_\W,||V||_\W\leq r$.
\end{itemize}
Let us denote
\begin{equation}\label{normab}
\Vert B\Vert_{{\mathcal L}(W_2,H)}=\Vert B^*\Vert_{{\mathcal L}(H, W_2)}=b.
\end{equation}
Then, 
\begin{equation}\label{normabcal}
\Vert\mathcal B\Vert_{{\mathcal L}(\W)}=b^2.
\end{equation}
We will prove well-posedness and exponential stability for system \eqref{modello}, for small initial data, under the assumption

\begin{equation}\label{assumption_delay}
M  b^2e^{\omega\overline{\tau}} \int_0^t \vert k(s)\vert ds  \leq \gamma+\omega' t, \quad \forall \ t>0,
\end{equation}
for suitable constants $\gamma\geq 0$ and $\omega'\in[0,\omega).$ 

 Let us introduce the following energy functional associated to system \eqref{modello}:
\begin{equation}\label{energy}
E(t):=\frac 12 ||u_t(t)||_H^2+\frac 12 ||A^{\frac 12}u(t)||_H^2-\psi(u(t))+\frac 12  \int_{t-\overline\tau}^t \vert k(s)\vert \cdot ||B^* u_t(s)||_{W_2}^2 ds.
\end{equation}
Moreover, let us define the functional
\begin{equation}\label{secondo_funzionale}
{\mathcal E}(t):=\max \left \{  \frac 14 \max_{s\in [-\overline\tau,0] }\Vert g(s)\Vert^2_{W_2}, \ 
b^2\max_{s\in [0, t] }E(s)  \right \}.
\end{equation}

Then, in particular, for $t=0,$
\begin{equation}\label{funzionale_in_zero}
{\mathcal E}(0)=\max \left \{  \frac 14 \max_{s\in [-\overline\tau,0] }\Vert g(s)\Vert^2_{W_2}, \ 
b^2E(0)  \right \}.
\end{equation}

We have the following result.
\begin{Proposition}\label{proposition}
Let $u$ be a solution to \eqref{modello}. If $E(t)\geq \frac 14 ||u_t(t)||_H^2$ for any $t\geq 0$, then
\begin{equation}\label{tesi_prop}
E(t)\le   \bar C(t) {{\mathcal E}(0)}
\end{equation}
for any $t\ge \overline 0$, where 
\begin{equation}\label{Cbar}
\displaystyle{
\bar C (t):=\frac 1 {b^2} \,e^{6b^2\int_{0}^t \vert k(s)\vert  ds}}.
\end{equation}
\end{Proposition}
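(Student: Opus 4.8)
The plan is to differentiate the energy $E$ along the solution, use the equation to absorb the conservative terms, discard the manifestly dissipative ones, and then close a Gronwall-type inequality for the auxiliary functional ${\mathcal E}$. So, first I would differentiate \eqref{energy} along a (sufficiently regular) solution. Writing $v=u_t$ and inserting the equation in \eqref{modello} into $\langle u_{tt}(t),u_t(t)\rangle_H$, the contributions $\langle Au(t),u_t(t)\rangle_H$ and $\langle\nabla\psi(u(t)),u_t(t)\rangle_H$ cancel against those coming from $\frac12\frac{d}{dt}||A^{1/2}u(t)||_H^2$ and from $-\frac{d}{dt}\psi(u(t))$, while differentiating the history term produces $\frac12|k(t)|\,||B^*u_t(t)||_{W_2}^2-\frac12|k(t-\overline\tau)|\,||B^*u_t(t-\overline\tau)||_{W_2}^2$. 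This leaves
\[
E'(t)=-||C^*u_t(t)||_{W_1}^2-k(t)\langle B^*u_t(t-\tau(t)),B^*u_t(t)\rangle_{W_2}+\tfrac12|k(t)|\,||B^*u_t(t)||_{W_2}^2-\tfrac12|k(t-\overline\tau)|\,||B^*u_t(t-\overline\tau)||_{W_2}^2 .
\]
Dropping the two non-positive terms and estimating the mixed product by Cauchy--Schwarz followed by Young's inequality, I get $E'(t)\le\frac12|k(t)|\,||B^*u_t(t-\tau(t))||_{W_2}^2+|k(t)|\,||B^*u_t(t)||_{W_2}^2$.

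Next I would prove the pointwise bound $||B^*u_t(\sigma)||_{W_2}^2\le 4\,{\mathcal E}(t)$ for every $\sigma\in[-\overline\tau,t]$. If $\sigma\in[-\overline\tau,0]$ this holds because $B^*u_t(\sigma)=g(\sigma)$ by \eqref{modello} and $||g(\sigma)||_{W_2}^2\le 4{\mathcal E}(t)$ by \eqref{secondo_funzionale}; if $\sigma\in[0,t]$, then by \eqref{normab} and the standing hypothesis $E(\sigma)\ge\frac14||u_t(\sigma)||_H^2$ one has $||B^*u_t(\sigma)||_{W_2}^2\le b^2||u_t(\sigma)||_H^2\le 4b^2E(\sigma)\le 4{\mathcal E}(t)$. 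Since, by \eqref{taubar}, $t-\tau(t)\in[t-\overline\tau,t]\subset[-\overline\tau,t]$, both norms appearing in the bound for $E'(t)$ are $\le 4{\mathcal E}(t)$, so that $E'(t)\le 6|k(t)|\,{\mathcal E}(t)$ for every $t\ge0$.

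Finally I would integrate this inequality. For $\sigma\in[0,t]$, using $b^2E(0)\le{\mathcal E}(0)$ we obtain $b^2E(\sigma)\le{\mathcal E}(0)+6b^2\int_0^\sigma|k(r)|{\mathcal E}(r)\,dr$; taking the maximum over $\sigma\in[0,t]$ (the integrand being non-negative) and recalling also that $\frac14\max_{s\in[-\overline\tau,0]}||g(s)||_{W_2}^2\le{\mathcal E}(0)$, definition \eqref{secondo_funzionale} gives ${\mathcal E}(t)\le{\mathcal E}(0)+6b^2\int_0^t|k(r)|{\mathcal E}(r)\,dr$. Gronwall's lemma (applicable since $k\in L^1_{loc}$) then yields ${\mathcal E}(t)\le{\mathcal E}(0)\,e^{6b^2\int_0^t|k(r)|dr}$, and since $b^2E(t)\le{\mathcal E}(t)$ by \eqref{secondo_funzionale} we conclude $E(t)\le\frac1{b^2}{\mathcal E}(t)\le\frac1{b^2}e^{6b^2\int_0^t|k(s)|ds}{\mathcal E}(0)=\bar C(t)\,{\mathcal E}(0)$, which is \eqref{tesi_prop}.

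The hard part will be the first step. The differentiation of $E$ — in particular the chain rule $\frac{d}{dt}\psi(u(t))=\langle\nabla\psi(u(t)),u_t(t)\rangle_H$ via (H1), and $\frac{d}{dt}||A^{1/2}u(t)||_H^2=2\langle Au(t),u_t(t)\rangle_H$ — is legitimate only for strong solutions, so the energy identity must be established for regular data and then passed to the limit by density. One must also keep track of the fact that in \eqref{energy} the history is fed by $g$ exactly on $[-\overline\tau,0]$, which is precisely what makes the case distinction in the pointwise bound work, and that $t-\tau(t)$ never leaves $[-\overline\tau,t]$ thanks to \eqref{taubar}. In comparison, getting the numerical constant $6b^2$ right is only bookkeeping in Young's inequality together with the factor $b^2$ in the definition of ${\mathcal E}$.
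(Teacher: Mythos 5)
Your proof is correct and follows essentially the same route as the paper: differentiate $E$ along the solution, drop the dissipative terms, apply Young's inequality to the mixed delay term, bound $\Vert B^*u_t(\sigma)\Vert_{W_2}^2$ by $4\mathcal{E}(t)$ through the same case distinction ($\sigma\le 0$ via $g$, $\sigma\ge 0$ via $E(\sigma)\ge\frac14\Vert u_t(\sigma)\Vert_H^2$), and close with Gronwall to reach the same constant $6b^2$. The only difference is cosmetic: you run Gronwall in integral form on $\mathcal{E}$, whereas the paper differentiates $\mathcal{E}$ directly after observing it is "constant or increases as $b^2E(t)$"; your variant is arguably the cleaner of the two, since the running maximum $\mathcal{E}$ need not be differentiable everywhere.
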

\begin{proof}
Differentiating the energy  $E$ in time, we have 
$$
\begin{array}{l}
\displaystyle{\frac{d}{dt}E(t)=\langle u_t(t),u_{tt}(t)\rangle_H+\langle A^{\frac 12} u(t),  A^{\frac 12} u_t(t)\rangle_H-\langle \nabla \psi(u(t)), u_t(t)\rangle }\\
\hspace{2 cm}
\displaystyle{+\frac 12 \vert k(t)\vert\cdot \Vert B^*u_t(t)\Vert^2_{W_2}-\frac 12  \vert k(t-\overline\tau)\vert \cdot \Vert B^*u_t(t-\overline \tau)||^2_{W_2}.}
\end{array}
$$
By using the equation \eqref{modello}, we deduce
$$
\begin{array}{l}
\displaystyle{\frac{d}{dt}E(t) =-||C^* u_t(t)||_{W_1}^2- k(t)\langle B^* u_t(t),B^*u_t(t-\tau(t))\rangle_{W_2}}\\
\hspace{2 cm}
\displaystyle{+\frac 12  \vert k(t)\vert \cdot \Vert B^*u_t(t)\Vert_{W_2}-\frac 12  \vert k(t-\overline\tau)\vert \cdot \Vert B^*u_t(t-\overline\tau)\Vert_{W_2}.}
\end{array}
$$
Using Young inequality in the second term in the previous identity, we get
\begin{equation}\label{uffa}
\begin{array}{l}
\displaystyle{ \frac{d}{dt}E(t) \le  \vert k(t)\vert \Vert B^* u_t(t)\Vert_{W_2}^2+\frac 12\vert k(t)\vert \Vert B^* u_t(t-\tau(t))\Vert_{W_2}^2}\\
\hspace{2 cm}
\displaystyle{\le \frac 32  \vert k(t)\vert \, \max_{s\in [t-\overline t, t]}\{ \Vert B^*u_t(s)\Vert_{W_2}^2\}, \quad t\ge 0.}
\end{array}
\end{equation}
Now, observe that, for $t\in [0, \overline\tau),$ or
$$\max_{s\in [t-\overline t, t]}\{ \Vert B^*u_t(s)\Vert_{W_2}^2\}=\max_{s\in [t-\overline t, 0]}\{ \Vert B^*u_t(s)\Vert_{W_2}^2\}\le \max_{s\in [-\overline t, 0]}\{ \Vert g(s)\Vert_{W_2}^2\}\le 4{\mathcal E}(t),$$
or

$$\max_{s\in [t-\overline t, t]}\{ \Vert B^*u_t(s)\Vert_{W_2}^2\}=\max_{s\in [0, t]}\{ \Vert B^*u_t(s)\Vert_{W_2}^2\}\le 4b^2\max_{s\in [0, t]}E(s)\le4{\mathcal E}(t),$$
where we used the assumption  $E(t)\geq \frac 14 \Vert u_t(t)\Vert_H^2$ for any $t\geq 0.$
For $t\ge\overline\tau,$ from $E(t)\geq \frac 14 \Vert u_t(t)\Vert_H^2,$ we have

$$\max_{s\in [t-\overline t, t]}\{ \Vert B^*u_t(s)\Vert_{W_2}^2\}\le 4b^2\max_{s\in [t-\overline t, t]}E(s)\le4{\mathcal E}(t).$$
Then, using the above estimates in \eqref{uffa}, we deduce

$$
\frac{d}{dt}E(t) \le  6   \vert k(t)\vert {\mathcal E}(t), \quad t\ge 0.
$$ 

Now, observe that the function ${\mathcal E}(t)$ is constant or it increases as $b^2E(t).$ 
Then, 
$$
\frac{d}{dt} {\mathcal E}(t)\le 6 b^2  \vert k(t)\vert {\mathcal E}(t), \quad t\ge 0,
$$ 
 and Gronwall's inequality concludes the proof.
\end{proof}

Before proving our well-posedness and exponential stability results, we need some preliminary estimates.
Assume, for the moment, that the time delay is bounded from below by a positive constant, namely

\begin{equation}\label{bound_below}
\tau(t)\ge \tau_0, \quad \forall t\ge 0,
\end{equation}
for some positive constant $\tau_0.$ We will remove such an assumption later on.

We have the following local well--posedness result.
\begin{Lemma}
\label{lemma1}
Assume \eqref{bound_below}.
Let us consider the system \eqref{abstract_form} with initial data $U_0\in {\mathcal H}$ and  $f\in C([-\overline\tau,0]; {\mathcal H}).$ Then, there exists a unique continuous local solution $U(\cdot)$ defined on a time  interval $[0,\delta)$, with $\delta \le\tau_0.$ 
\end{Lemma}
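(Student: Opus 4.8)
The plan is to use the lower bound \eqref{bound_below} to turn the delay equation into an \emph{un}delayed semilinear Cauchy problem on $[0,\tau_0]$, and then to solve the latter by a contraction argument based on the Duhamel formula. First I would note that, since $\tau_0\le\tau(t)\le\overline\tau$, for every $t\in[0,\tau_0]$ one has $t-\tau(t)\in[-\overline\tau,0]$, so the prescribed history forces $\mathcal BU(t-\tau(t))=f(t-\tau(t))$ and the delay term in \eqref{abstract_form} becomes a \emph{known} source. Hence, for any $\delta\le\tau_0$, on $[0,\delta]$ the system is equivalent to
\[
U'(t)=\mathcal AU(t)+F(U(t))+G(t),\qquad U(0)=U_0,\qquad G(t):=-k(t)\,f(t-\tau(t)).
\]
Because $\tau$ is continuous and $f\in C([-\overline\tau,0];\mathcal H)$, the map $t\mapsto f(t-\tau(t))$ is continuous on $[0,\tau_0]$; combined with $k\in L^1_{loc}$ this gives $G\in L^1((0,\tau_0);\mathcal H)$. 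A continuous $U$ is then a (mild) solution on $[0,\delta]$ iff
\[
U(t)=(\Phi U)(t):=S(t)U_0+\int_0^t S(t-s)\bigl(F(U(s))+G(s)\bigr)\,ds,\qquad t\in[0,\delta],
\]
where $\{S(t)\}_{t\ge0}$ is the semigroup generated by $\mathcal A$; by \eqref{decay_semigroup} one has $\|S(t)\|_{\mathcal L(\mathcal H)}\le M$ for all $t\ge0$, and strong continuity of $S$ together with $F(U(\cdot))+G(\cdot)\in L^1$ makes $\Phi U$ continuous, so $\Phi$ maps $X_\delta:=C([0,\delta];\mathcal H)$ into itself.

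Next I would fix $R:=2M\|U_0\|_{\mathcal H}+1$ and work in the closed ball $\mathcal B_R\subset X_\delta$. Using (F1)--(F2), for $U\in\mathcal B_R$ one has $\|F(U(s))\|_{\mathcal H}\le L(R)R$, whence for $U,V\in\mathcal B_R$
\[
\|\Phi U\|_{X_\delta}\le M\|U_0\|_{\mathcal H}+M L(R) R\,\delta+M\!\int_0^{\delta}\!\|G(s)\|_{\mathcal H}\,ds,\qquad
\|\Phi U-\Phi V\|_{X_\delta}\le M L(R)\,\delta\,\|U-V\|_{X_\delta}.
\]
Since $L(R)R\,\delta\to0$ and $\int_0^\delta\|G(s)\|_{\mathcal H}\,ds\to0$ as $\delta\to0^+$ (absolute continuity of the Lebesgue integral), I can choose $\delta\in(0,\tau_0]$ so small that $\Phi$ maps $\mathcal B_R$ into itself and is a strict contraction there; Banach's fixed-point theorem then yields a unique $U\in\mathcal B_R$ with $U=\Phi U$, i.e.\ the desired continuous local solution on $[0,\delta)$. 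Uniqueness among all continuous solutions follows by applying Gronwall's inequality, via (F2) and $\|S(t)\|\le M$, to the difference of two solutions on any common subinterval.

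The step that needs the most care is the low time-regularity of the forcing: since $k$ is only $L^1_{loc}$, $G$ is merely integrable in $t$, so the maps-into-ball and contraction estimates, as well as the continuity of the Duhamel term, must be obtained from $\|S(t-s)\|\le M$ and absolute continuity of the integral rather than from any pointwise bound on $G$. Apart from that, the argument is the standard semigroup fixed-point scheme for locally Lipschitz semilinear problems, and the constraint $\delta\le\tau_0$ is precisely what keeps $t-\tau(t)$ inside the history interval $[-\overline\tau,0]$ for all $t\in[0,\delta]$.
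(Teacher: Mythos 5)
Your proposal is correct and follows exactly the paper's strategy: on $[0,\tau_0]$ the lower bound \eqref{bound_below} turns the delay term into the known forcing $-k(t)f(t-\tau(t))$, reducing \eqref{abstract_form} to an undelayed semilinear Cauchy problem, which the paper then solves by invoking standard nonlinear semigroup theory. You simply spell out the standard contraction/Duhamel argument (with appropriate care for the merely $L^1_{loc}$ coefficient $k$) that the paper leaves to the citation of Pazy.
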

\begin{proof}
In $[0,\tau_0]$, we can rewrite the abstract system \eqref{abstract_form} as  problem without any time delays:
\begin{eqnarray*}
U'(t)&=& \mathcal AU(t)-k(t) f(t-\tau(t))+F(U(t)), \quad t\in (0, \tau_0),\\
U(0)&=&U_0.
\end{eqnarray*}
Then, the standard theory of nonlinear semigroups (see e.g. \cite{Pazy}) ensures the existence of a unique solution  on a set $[0,\delta)$, with $\delta \le\tau_0$.
\end{proof}
The next two lemmas will allow to extend the local solution when the initial energy is sufficiently small.
\begin{Lemma}
\label{lemma2}
Assume \eqref{bound_below}.
Let $U(\cdot)$ be a non-zero solution to \eqref{abstract_form} defined on the interval $[0, \delta),$ and let $h$ be the strictly increasing function appearing in \eqref{stima_h}. 
 If $h(||A^\frac{1}{2} u_0||_H)<\frac 12,$ then $E(0)>0.$
\end{Lemma}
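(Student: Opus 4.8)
The plan is to read off the energy \eqref{energy} at $t=0$. Since $B^*u_t(s)=g(s)$ for $s\in[-\overline\tau,0]$,
$$
E(0)=\frac12\|u_1\|_H^2+\Big(\frac12\|A^{\tfrac12}u_0\|_H^2-\psi(u_0)\Big)+\frac12\int_{-\overline\tau}^0|k(s)|\,\|g(s)\|_{W_2}^2\,ds .
$$
The first and the last summand are non-negative, so the whole point is to control the ``potential'' term $P:=\frac12\|A^{\tfrac12}u_0\|_H^2-\psi(u_0)$, and then to upgrade the resulting inequality $E(0)\ge0$ to a strict one.

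To bound $\psi(u_0)$ I would integrate along the ray $t\mapsto tu_0$, $t\in[0,1]$. By (H1)--(H2) the function $\phi(t):=\psi(tu_0)$ is $C^1$ on $[0,1]$ with $\phi'(t)=D\psi(tu_0)(u_0)=\langle\nabla\psi(tu_0),u_0\rangle_H$ (continuity of $\phi'$ follows from the local Lipschitz bound in (H2)); since $\psi(0)=0$ by (H3), the fundamental theorem of calculus gives $\psi(u_0)=\int_0^1\langle\nabla\psi(tu_0),u_0\rangle_H\,dt$. Now Cauchy--Schwarz, the growth estimate \eqref{stima_h}, the monotonicity of $h$ (so $h(t\|A^{\tfrac12}u_0\|_H)\le h(\|A^{\tfrac12}u_0\|_H)$ for $t\in[0,1]$) and the Poincar\'e-type inequality $\|u_0\|_H\le\|A^{\tfrac12}u_0\|_H$ yield
$$
\psi(u_0)\le\int_0^1 h\big(t\|A^{\tfrac12}u_0\|_H\big)\,t\,\|A^{\tfrac12}u_0\|_H\,\|u_0\|_H\,dt\le\tfrac12\,h\big(\|A^{\tfrac12}u_0\|_H\big)\,\|A^{\tfrac12}u_0\|_H^2 .
$$
The hypothesis $h(\|A^{\tfrac12}u_0\|_H)<\tfrac12$ then gives $\psi(u_0)<\tfrac14\|A^{\tfrac12}u_0\|_H^2$, hence $P>\tfrac14\|A^{\tfrac12}u_0\|_H^2\ge0$, and in particular $E(0)>0$ as soon as $u_0\neq0$.

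It remains to handle $u_0=0$, which is the only place where non-triviality of $U$ is used: then $P=0$ and $E(0)=\tfrac12\|u_1\|_H^2+\tfrac12\int_{-\overline\tau}^0|k(s)|\,\|g(s)\|_{W_2}^2\,ds\ge0$. If this were $0$, then $u_1=0$ and the memory integral would vanish, so that $U_0=0$ and the delay term in \eqref{abstract_form} is inactive; by the uniqueness part of Lemma \ref{lemma1} this would force $U\equiv0$, contradicting that $U$ is a non-zero solution. Hence $E(0)>0$ in every case.

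I expect the delicate point to be the sharp estimate of the potential term: one has to squeeze the constant $\tfrac12=\int_0^1 t\,dt$ out of the ray integration so that it combines with $h<\tfrac12$ to leave precisely the surplus $\tfrac14\|A^{\tfrac12}u_0\|_H^2$ — any looser bound on $\psi(u_0)$ would fail to close the argument. The application of the fundamental theorem of calculus to the merely G\^{a}teaux-differentiable $\psi$, and the assembling of the non-negative pieces of $E(0)$, are routine given (H1)--(H3) and Lemma \ref{lemma1}.
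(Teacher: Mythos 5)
Your argument is correct and follows essentially the same route as the paper: the bound $\psi(u_0)\le\frac12\,h(\Vert A^{\frac12}u_0\Vert_H)\Vert A^{\frac12}u_0\Vert_H^2$, obtained by integrating along the ray $t\mapsto tu_0$ and combining (H3) with the monotonicity of $h$ and $\Vert u_0\Vert_H\le\Vert A^{\frac12}u_0\Vert_H$, is exactly the paper's estimate \eqref{assumptionPsi}, and the conclusion $E(0)>0$ under $h(\Vert A^{\frac12}u_0\Vert_H)<\frac12$ is drawn in the same way. Your explicit case split at $u_0=0$ (appealing to uniqueness from Lemma \ref{lemma1}) merely spells out what the paper compresses into the one-line assertion that, $U$ being a non-zero solution, the right-hand side of the resulting strict inequality is positive.
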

\begin{proof}
First of all observe that, from the assumption (H3) on $\psi,$ we can deduce
\begin{equation}
\label{assumptionPsi}
\begin{array}{l}
\displaystyle{|\psi(u)|\leq \int_0^1 |\langle \nabla \psi (su),u\rangle | ds} \\
\hspace{1,15 cm}
\displaystyle{\leq  ||A^\frac{1}{2}u||^2_H \int_0^1 h(s||A^\frac{1}{2}u||_H)s ds\leq \frac{1}{2} h(||A^\frac{1}{2}u||_H)||A^\frac{1}{2}u||^2_H.}
\end{array}
\end{equation}
Hence, under the assumption $h (\Vert A^{\frac 12} u_0\Vert_H) < \frac {1} 2,$ we have that
\begin{equation}\label{27luglio}
\begin{array}{l}
\displaystyle{ E(0)=\frac{1}{2}||u_1||_H^2+\frac{1}{2}||A^\frac{1}{2}u_0||_H^2-\psi(u_0)+\frac{1}{2}\int_{-\overline\tau}^0 |k(s)|\cdot ||B^*u_t(s)||^2_{W_2} ds}\\
\hspace{0,9 cm}
\displaystyle{ >\frac{1}{4}||u_1||^2_H+\frac{1}{4}||A^\frac{1}{2}u_0||^2_H +\frac{1}{4} \int_{-\overline\tau}^0 |k(s)| \cdot ||B^*u_t(s)||^2_{W_2} ds. }\\
\end{array}
\end{equation}
Then, since $U$ is a non-zero solution, the right-hand side of previous inequality is strictly positive. Therefore, the claim  is proven.
\end{proof}
\begin{Lemma}
\label{lemmaspezzato}
Assume \eqref{bound_below}.
Let $U(\cdot)$ be a non-zero solution to \eqref{abstract_form} defined on the interval $[0, \delta),$ with $\delta\le T.$ Let $h$ be the strictly increasing function appearing in \eqref{stima_h}. 
If $h(||A^\frac{1}{2} u_0||_H)<\frac 12$ and
$
h \left( 2 \bar{C}^\frac{1}{2}(T) {\mathcal E}^\frac{1}{2}(0) \right) <\frac 12,
$
with $\bar{C}(\cdot)$ defined as in \eqref{Cbar}, then
\begin{equation}
\label{stima E dal basso}
\begin{array}{l}
\displaystyle{ E(t)>\frac{1}{4}||u_t(t)||_H^2+\frac{1}{4}||A^\frac{1}{2}u(t)||_H^2 +\frac{1}{4}\int_{t-\overline\tau}^t |k(s)| \cdot ||B^*u_t(s)||_{W_2}^2 ds}
\end{array}
\end{equation}
for all $t\in[0, \delta)$. In particular,
\begin{equation}\label{J2}
E(t)>  \frac 14 \Vert U(t)\Vert_{\W}^2, \quad \mbox{for all} \ \ t\in [0, \delta).
\end{equation}
\end{Lemma}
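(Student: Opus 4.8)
The strategy is a bootstrap (continuity) argument driven by Proposition \ref{proposition}, reminiscent of the estimate \eqref{27luglio}. The first step is a reduction: I claim that the whole statement follows once one knows that $h(\Vert A^{\frac12}u(t)\Vert_H)<\frac12$ for every $t\in[0,\delta)$. Indeed, by \eqref{assumptionPsi}, at any $t$ where $h(\Vert A^{\frac12}u(t)\Vert_H)<\frac12$ one has $\psi(u(t))\le\frac12 h(\Vert A^{\frac12}u(t)\Vert_H)\Vert A^{\frac12}u(t)\Vert_H^2\le\frac14\Vert A^{\frac12}u(t)\Vert_H^2$, so inserting this into the definition \eqref{energy} of $E$ gives
\[
E(t)\ \ge\ \tfrac14\Vert u_t(t)\Vert_H^2+\tfrac14\Vert A^{\frac12}u(t)\Vert_H^2+\tfrac14\int_{t-\overline\tau}^t|k(s)|\,\Vert B^*u_t(s)\Vert_{W_2}^2\,ds,
\]
and the inequality is strict unless $u_t(t)=0$, $u(t)=0$ and the delay integral vanishes at $t$ — a degenerate situation that is excluded for a non-zero solution exactly as in the proof of Lemma \ref{lemma2}. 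Thus \eqref{stima E dal basso} and then \eqref{J2} hold, and it suffices to establish the scalar condition on $h$ above.

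To that end, note that $t\mapsto h(\Vert A^{\frac12}u(t)\Vert_H)$ is continuous (since $U\in C([0,\delta);\mathcal H)$ and $h$ is continuous) and is $<\frac12$ at $t=0$ by the first hypothesis, because $u(0)=u_0$. Let $t^\ast$ be the supremum of those $t\in[0,\delta)$ for which $h(\Vert A^{\frac12}u(s)\Vert_H)<\frac12$ on $[0,t)$. On $[0,t^\ast)$ the previous step yields in particular $E(s)\ge\frac14\Vert u_t(s)\Vert_H^2$, so that the differentiation-plus-Gronwall computation of Proposition \ref{proposition}, carried out on the interval $[0,t]$ for each $t<t^\ast$ (its only input being precisely this inequality), gives $E(t)\le\bar C(t)\,\mathcal E(0)\le\bar C(T)\,\mathcal E(0)$, because $\bar C$ of \eqref{Cbar} is nondecreasing and $t^\ast\le\delta\le T$. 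Combined with $E(t)\ge\frac14\Vert A^{\frac12}u(t)\Vert_H^2$ this produces the a priori bound $\Vert A^{\frac12}u(t)\Vert_H\le 2\,\bar C^{\frac12}(T)\,\mathcal E^{\frac12}(0)$ for all $t\in[0,t^\ast)$.

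Now I would close the bootstrap. By monotonicity of $h$, the a priori bound gives $h(\Vert A^{\frac12}u(t)\Vert_H)\le h\big(2\bar C^{\frac12}(T)\mathcal E^{\frac12}(0)\big)<\frac12$ on $[0,t^\ast)$, the last inequality being the second hypothesis. If $t^\ast<\delta$, then passing to the limit $t\uparrow t^\ast$ and using continuity of $t\mapsto\Vert A^{\frac12}u(t)\Vert_H$ gives $\Vert A^{\frac12}u(t^\ast)\Vert_H\le 2\bar C^{\frac12}(T)\mathcal E^{\frac12}(0)$, hence $h(\Vert A^{\frac12}u(t^\ast)\Vert_H)<\frac12$; by continuity this strict inequality then persists on a neighbourhood of $t^\ast$, contradicting the maximality of $t^\ast$. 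Hence $t^\ast=\delta$, the condition $h(\Vert A^{\frac12}u(t)\Vert_H)<\frac12$ holds throughout $[0,\delta)$, and the reduction of the first step then yields \eqref{stima E dal basso} and \eqref{J2}.

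The point that requires care is the self-consistency of the bootstrap: the a priori bound on $\Vert A^{\frac12}u\Vert_H$ produced by Proposition \ref{proposition} must keep $h$ strictly below $\frac12$ with a margin that does not deteriorate as $t\uparrow t^\ast$. This is exactly why the second smallness assumption is formulated with the constant $\bar C(T)$ — a uniform upper bound for $\bar C(\cdot)$ on the existence interval — rather than with the $t$-dependent $\bar C(t)$, and why one invokes Proposition \ref{proposition} (hence the monotonicity of the auxiliary functional $\mathcal E$) instead of trying to bound $E$ directly. The non-triviality of $U$ enters only to upgrade the above inequality to the strict form claimed in \eqref{stima E dal basso} and \eqref{J2}.
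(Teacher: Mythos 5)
Your proposal is correct and follows essentially the same route as the paper: a continuity/bootstrap argument in which the critical time is handled by invoking Proposition \ref{proposition} together with \eqref{assumptionPsi} and the smallness hypothesis $h\bigl(2\bar C^{1/2}(T)\mathcal E^{1/2}(0)\bigr)<\frac12$ to derive a contradiction with maximality. The only (cosmetic) difference is that you run the bootstrap on the scalar condition $h(\Vert A^{1/2}u(t)\Vert_H)<\frac12$ while the paper runs it directly on the inequality \eqref{stima E dal basso}; the two are interchangeable via exactly the implications you spell out.
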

\begin{proof}
To prove  \eqref{stima E dal basso}, we argue by contradiction. Let us denote
$$
r:=\sup \{ s\in [0,\delta) : \eqref{stima E dal basso} \quad \text{holds} \quad \forall t\in [0,s)\}.
$$
We suppose by contradiction that $r<\delta$. Then, by continuity, we have
\begin{equation}
\label{continuita}
\begin{array}{l}
\displaystyle{E(r)=\frac{1}{4}||u_t(r)||^2_H+\frac{1}{4}||A^\frac{1}{2}u(r)||_H^2+\frac{1}{4}\int_{r-\overline\tau}^r |k(s)| \cdot ||B^*u_t(s)||_{W_2}^2 ds.}
\end{array}
\end{equation}
Now, since from \eqref{continuita}
$$
\frac{1}{4} \Vert A^{\frac 1 2} u(r)\Vert^2_H\leq E(r),
$$
by using Proposition \ref{proposition}, we deduce that
\begin{equation}\label{risultato}
\begin{array}{l}
\displaystyle{ h(||A^\frac{1}{2}u(r)||_H)\leq h\left( 2 E^\frac{1}{2}(r)\right)  \leq h\left( 2\bar{C}^\frac{1}{2}(T){\mathcal E}^\frac{1}{2}(0)\right) <\frac{1}{2}.}
\end{array}
\end{equation}
Then, we have that
$$
\begin{array}{l}
\displaystyle{ E(r)=
\frac{1}{2}||u_t(r)||_H^2+\frac{1}{2}||A^\frac{1}{2}u(r)||_H^2-\psi(u(r))+\frac{1}{2}\int_{r-\overline\tau}^r|k(s)|\cdot ||B^*u_t(s)||^2_{W_2} ds}\\
\hspace{0.9 cm}
 \displaystyle{
>\frac{1}{4}||u_t(r)||_H^2+\frac{1}{4}||A^\frac{1}{2}u(r)||_H^2+\frac{1}{4}\int_{r-\overline\tau}^r|k(s)| \cdot ||B^*u_t(s)||_{W_2}^2 ds,}
\end{array}
$$
where in the last estimate we used \eqref{assumptionPsi} and \eqref{risultato}. This contradicts the maximality of $r$. Hence, $r=\delta$ and this concludes the proof of the lemma.
\end{proof}

\section{Exponential stability}
\label{exp}\hspace{5mm}

\setcounter{equation}{0}

We can now proceed to prove the well-posedness assumption for system \eqref{modello} under the technical assumption (that we will remove later on) \eqref{bound_below} on the time delay function.

First, we give an exponential decay result for the abstract model \eqref{abstract_form} under a suitable well-posedness assumption.

\begin{Theorem}\label{generaleCV}
Assume \eqref{assumption_delay}.
Moreover, suppose that 
\begin{itemize}
\item[{(I)}] there exist $\rho>0$, $C_\rho>0$,  with $L(C_\rho)<\frac{\omega-\omega '}{M}$ such that if $U_0 \in \W$ and if $f\in C([-\overline{\tau} ,0];\W)$ satisfy
\begin{equation}\label{well-posedness}
||U_0||^2_{\W}+ \int_{-\overline{\tau}}^0 |k(s)| \cdot ||f(s)||^2_{\W} ds <\rho ^2,
\end{equation}
then the system \eqref{abstract_form} has a unique solution $U\in C([0,+\infty);\W)$ satisfying $||U(t)||_{\W}\leq C_\rho$ for all $t>0$.
\end{itemize}
Then, for every solution $U$ of \eqref{abstract_form}, with initial data $(U_0, f)$ satisfying \eqref{well-posedness},
\begin{equation}
\label{stimaesponenziale}
||U(t)||_{\W}\leq Me^\gamma \left (\Vert U_0\Vert_{\mathcal H}+e^{\omega \overline\tau}K \max_{s\in[-\overline\tau, 0]}\left\{\Vert e^{\omega s} f(s)\Vert_{\mathcal H}\right\}\right )e^{-(\omega -\omega'-ML(C_{\rho}))t}, 
\end{equation}
for any $t\geq 0$.
\end{Theorem}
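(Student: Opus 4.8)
The plan is a Duhamel-plus-Gronwall argument, with assumption \eqref{assumption_delay} invoked only at the last step. By hypothesis (I), for data $(U_0,f)$ satisfying \eqref{well-posedness} there is a global mild solution $U\in C([0,+\infty);\W)$ with $\|U(t)\|_\W\le C_\rho$ for all $t\ge 0$, so it satisfies the variation-of-constants formula
\[
U(t)=S(t)U_0+\int_0^t S(t-s)\bigl[-k(s)\,\mathcal BU(s-\tau(s))+F(U(s))\bigr]\,ds .
\]
First I would take $\W$-norms and use \eqref{decay_semigroup}, \eqref{normabcal}, the a priori bound $\|U(s)\|_\W\le C_\rho$, and (F1)--(F2) (so that $\|F(U(s))\|_\W=\|F(U(s))-F(0)\|_\W\le L(C_\rho)\|U(s)\|_\W$); multiplying by $e^{\omega t}$ and writing $w(t):=e^{\omega t}\|U(t)\|_\W$ gives
\[
w(t)\le M\|U_0\|_\W+M\int_0^t e^{\omega s}|k(s)|\,\|\mathcal BU(s-\tau(s))\|_\W\,ds+ML(C_\rho)\int_0^t w(s)\,ds .
\]

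The only genuinely delicate point — and the step I expect to be the main obstacle — is the delay integral, because $\tau$ is merely continuous, so the set $\mathcal N:=\{s\ge0:\ s-\tau(s)<0\}$ need not be an interval; it is, however, contained in $[0,\overline\tau)$ since $\tau\le\overline\tau$. I would split the integral over $\mathcal N$ and its complement. On $\mathcal N$ one has $\mathcal BU(s-\tau(s))=f(s-\tau(s))$, and factoring $e^{\omega s}=e^{\omega\tau(s)}e^{\omega(s-\tau(s))}$ with $\tau(s)\le\overline\tau$ yields $e^{\omega s}\|f(s-\tau(s))\|_\W\le e^{\omega\overline\tau}\max_{\sigma\in[-\overline\tau,0]}\{\|e^{\omega\sigma}f(\sigma)\|_\W\}$; together with $\int_0^{\overline\tau}|k(s)|\,ds\le K$ (take $t=\overline\tau$ in \eqref{damp_coeff}) this contributes at most $Me^{\omega\overline\tau}K\max_{\sigma\in[-\overline\tau,0]}\{\|e^{\omega\sigma}f(\sigma)\|_\W\}$. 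Off $\mathcal N$, i.e. when $0\le s-\tau(s)\le s$, \eqref{normabcal} gives $\|\mathcal BU(s-\tau(s))\|_\W\le b^2\|U(s-\tau(s))\|_\W$, and the same factorization gives $e^{\omega s}|k(s)|\,\|\mathcal BU(s-\tau(s))\|_\W\le b^2e^{\omega\overline\tau}|k(s)|\,\widetilde w(s)$ with $\widetilde w(s):=\sup_{[0,s]}w$.

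Collecting terms, and setting $D:=M\bigl(\|U_0\|_\W+e^{\omega\overline\tau}K\max_{\sigma\in[-\overline\tau,0]}\{\|e^{\omega\sigma}f(\sigma)\|_\W\}\bigr)$, this produces
\[
w(t)\le D+Mb^2e^{\omega\overline\tau}\int_0^t|k(s)|\,\widetilde w(s)\,ds+ML(C_\rho)\int_0^t w(s)\,ds .
\]
Since the right-hand side is non-decreasing in $t$, taking the supremum over $[0,t]$ turns this into the scalar Gronwall inequality $\widetilde w(t)\le D+\int_0^t\bigl(Mb^2e^{\omega\overline\tau}|k(s)|+ML(C_\rho)\bigr)\widetilde w(s)\,ds$, whence $\widetilde w(t)\le D\exp\bigl(Mb^2e^{\omega\overline\tau}\int_0^t|k(s)|\,ds+ML(C_\rho)t\bigr)$. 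It is precisely here that \eqref{assumption_delay} enters: $Mb^2e^{\omega\overline\tau}\int_0^t|k|\le\gamma+\omega' t$, so $\widetilde w(t)\le De^{\gamma}e^{(\omega'+ML(C_\rho))t}$. Finally $\|U(t)\|_\W=e^{-\omega t}w(t)\le e^{-\omega t}\widetilde w(t)\le De^{\gamma}e^{-(\omega-\omega'-ML(C_\rho))t}$, which is exactly \eqref{stimaesponenziale}; the decay exponent $\omega-\omega'-ML(C_\rho)$ is positive thanks to the requirement $L(C_\rho)<(\omega-\omega')/M$ in (I). The routine verifications (measurability of the split, applicability of Gronwall to the continuous function $\widetilde w$) are straightforward.
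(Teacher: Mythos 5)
Your proposal is correct and follows essentially the same route as the paper: Duhamel's formula, the bound $\|F(U)\|_\W\le L(C_\rho)\|U\|_\W$ from (I) and (F1)--(F2), the factorization $e^{\omega s}\le e^{\omega\overline\tau}e^{\omega(s-\tau(s))}$ together with \eqref{damp_coeff} to absorb the history part into the constant $Me^{\omega\overline\tau}K\max_{\sigma\in[-\overline\tau,0]}\{\|e^{\omega\sigma}f(\sigma)\|_\W\}$, and a Gronwall inequality for a running supremum of $e^{\omega t}\|U(t)\|_\W$ followed by \eqref{assumption_delay}. The only cosmetic differences are that you split the delay integral according to the sign of $s-\tau(s)$ rather than at $s=\overline\tau$, and you use $\sup_{[0,s]}$ in place of the paper's windowed maximum $\max_{[s-\overline\tau,s]\cap[0,s]}$; both choices lead to the same estimate.
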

\begin{proof}
From Duhamel's formula we have
$$
\begin{array}{l}
\displaystyle{||U(t)||_{\mathcal H}\leq Me^{-\omega t} ||U_0||_{\mathcal H}+Me^{-\omega t} \int_0^t e^{\omega s}|k(s)| \cdot ||\mathcal BU(s-\tau(s))||_{\mathcal H} ds}\\
\hspace{1.7 cm}
\displaystyle{ +ML(C_\rho)e^{-\omega t} \int_0^t e^{\omega s} ||U(s)||_{\mathcal H} ds,}
\end{array}
$$
where we have used the fact that $||F(U(t))||_{\mathcal H}\leq L(C_\rho) ||U(t)||_{\mathcal H}$ for any $t\geq 0$. Then, we deduce that
$$
\begin{array}{l}
\displaystyle{ ||U(t)||_{\mathcal H}\le Me^{-\omega t} ||U_0||_{\mathcal H}+Me^{-\omega t} \int_0^{\overline\tau} e^{\omega s} |k(s)| \cdot ||{\mathcal B}U(s-\tau(s))||_{\mathcal H} ds}\\
\hspace{1.5 cm}
\displaystyle{+ Me^{-\omega t} \int_{\overline\tau}^t e^{\omega s} b^2 |k(s)|\cdot ||U(s-\tau(s))||_{\mathcal H}  ds +ML(C_\rho)e^{-\omega t} \int_0^t e^{\omega s} ||U(s)||_{\mathcal H} ds}\\
\hspace{1.2 cm}
\displaystyle{
\le Me^{-\omega t} ||U_0||_{\mathcal H}+Me^{-\omega t}e^{\omega\overline\tau} \int_0^{\overline\tau} e^{\omega (s-\tau(s))} |k(s)| \cdot ||{\mathcal B}U(s-\tau(s))||_{\mathcal H} ds}\\
\hspace{1.5 cm}
\displaystyle{+ Me^{-\omega t}e^{\omega\overline\tau} \int_{\overline\tau}^t e^{\omega (s-\tau(s))} b^2 |k(s)|\cdot ||U(s-\tau(s))||_{\mathcal H}  ds +ML(C_\rho)e^{-\omega t} \int_0^t e^{\omega s} ||U(s)||_{\mathcal H} ds.}
\end{array}
$$
Now, observe that
\begin{equation}\label{March16_4}
\begin{array}{l}
\displaystyle{
\int_0^{\overline\tau} e^{\omega (s-\tau(s))}\vert k(s)\vert\cdot \Vert {\mathcal B}U(s-\tau(s))\Vert ds}\\
\displaystyle{\hspace{1 cm}
\le
\int_0^{\overline\tau} \vert k(s)\vert\left (   \max_{s\in[-\overline\tau, 0]}\left\{ e^{\omega s}\Vert f(s)\Vert_{\mathcal H}\right \}+b^2\max_{r\in[0,s]}  \left\{e^{\omega r}\Vert U(r)\Vert_{\mathcal H}  \right\}    \right) ds}\\
\hspace{1 cm}\le\displaystyle{ K \max_{s\in[-\overline\tau, 0]}\left\{e^{\omega s}\Vert f(s)\Vert_{\mathcal H}\right\}+\int_0^{\overline\tau}b^2\vert k(s)\vert \max_{r\in[0,s]}\left\{e^{\omega r}\Vert U(r)\Vert_{\mathcal H}\right\} ds.
}
\end{array}
\end{equation}
Therefore, using \eqref{March16_4} in the previous inequality, we deduce
$$
\begin{array}{l}
\displaystyle{ ||U(t)||_{\mathcal H}\le Me^{-\omega t} \left (||U_0||_{\mathcal H}+
e^{\omega\overline\tau}  K \max_{s\in[-\overline\tau, 0]}\left\{e^{\omega s}\Vert f(s)\Vert_{\mathcal H}\right\}
\right )
}\\
\hspace{1,5 cm}
\displaystyle{+ Me^{-\omega t} e^{\omega\overline\tau}\int_{0}^t  b^2 |k(s)|\max_{r\in [s-\overline\tau, s]\cap [0,s]} \left\{e^{\omega r}||U(r)||_{\mathcal H}\right\}  ds}\\
\hspace{1,5 cm}\displaystyle{ +ML(C_\rho)e^{-\omega t} \int_0^t  \max_{r\in [s-\overline\tau, s]\cap [0,s]} \left\{e^{\omega r} ||U(r)||_{\mathcal H}\right\} ds.}
\end{array}
$$

Then,
$$
\begin{array}{l}
\displaystyle{ e^{\omega t}||U(t)||_{\mathcal H} \le M \left (||U_0||_{\mathcal H}+
e^{\omega\overline\tau} K \max_{s\in[-\overline\tau, 0]}\left\{\Vert e^{\omega s} f(s)\Vert_{\mathcal H}\right\}
\right )}\\
\hspace{2 cm}
\displaystyle{+M  \int_0^t(b^2 e^{\omega\overline\tau} \vert k(s)\vert +L(C_\rho))  \max_{r\in [s-\overline\tau, s]\cap [0,s]}\left\{e^{\omega r} ||U(r)||_{\mathcal H} \right \}ds, \quad t\ge 0.}
\end{array}
$$
Therefore, it is easy to see that 
$$
\begin{array}{l}
\displaystyle{ \max_{s\in [t-\overline\tau, t]\cap [0,t]}\left\{e^{\omega s}||U(s)||_{\mathcal H}\right\} \le M  \left (||U_0||_{\mathcal H}+
e^{\omega\overline\tau} K \max_{s\in [-\overline\tau, 0]}\left\{\Vert e^{\omega s} f(s)\Vert_{\mathcal H}\right\}
\right )}\\
\hspace{2 cm}
\displaystyle{+M  \int_0^t(b^2e^{\omega\overline\tau}  \vert k(s)\vert +L(C_\rho))  \max_{r\in [s-\overline\tau, s]\cap [0,s]}\left\{e^{\omega r} ||U(r)||_{\mathcal H} \right \}ds, \quad t\ge 0.}
\end{array}
$$

Hence, if we denote 
$$\tilde u (t):=\max_{s\in [t-\overline\tau, t]\cap [0,t]}\left\{e^{\omega s}||U(s)||_{\mathcal H}\right\},$$
 Gronwall's estimate implies
$${\tilde u}(t)\le {\tilde M} e^{  Mb^2 e^{\omega\overline\tau}\int_0^t \vert k(s)\vert ds +ML(C_\rho) t},
$$
where
$$\tilde M:= M\left (
\Vert U_0\Vert_{\mathcal H}+e^{\omega \overline\tau}K \max_{s\in[-\overline\tau, 0]}\left\{\Vert e^{\omega s} f(s)\Vert_{\mathcal H}\right\}
\right ). 
$$
Then, 
$$e^{\omega t}\Vert U(t)\Vert_{\mathcal H}\le {\tilde M} e^{  Mb^2 e^{\omega\overline\tau}\int_0^t \vert k(s)\vert ds +ML(C_\rho) t},
$$
and, by assumption \eqref{assumption_delay}, we get the exponential  decay estimate \eqref{stimaesponenziale}.
\end{proof}
Now, from Theorem \ref{generaleCV}, in order to have the exponential stability of solutions to \eqref{modello}, we need to show that the well-posedness assumption $(I)$ actually holds true for system \eqref{modello}.

\begin{Theorem}\label{wellpos_thm}
Assume \eqref{assumption_delay}. Tthere exists a constant  $\rho>0$ such that if
$$
||u_1||_H^2+||A^{\frac 12}u_0||_H^2 +\int_{-\overline\tau}^0 |k(s)| \cdot \Vert g(s)\Vert_{W_2}^2 ds \le \rho^2, 
$$ 
and
$$
\max_{s\in [-\overline\tau,  0]}\Vert g(s)\Vert_H\le 2b\rho,$$
then the solution $u$ of
\eqref{modello}, corresponding to the initial data $(u_0, u_1)\in {\mathcal H}$ and $g\in C([-\overline\tau,0]; {W_2}),$  satisfies the exponential decay estimate
$$E(t)\le \tilde K e^{-\tilde\beta t}, \quad t\ge 0,$$
where $\tilde K$ is a constant dependent on the initial data and $\tilde\beta>0.$
\end{Theorem}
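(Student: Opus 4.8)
The plan is to construct, for sufficiently small data, a global mild solution of \eqref{abstract_form} (equivalently of \eqref{modello}) obeying the decay \eqref{stimaesponenziale}, thereby verifying the well-posedness hypothesis (I) of Theorem \ref{generaleCV}, and then to read off the exponential decay of the energy $E$. Fix $C_\rho>0$ small enough that $L(C_\rho)<\frac{\omega-\omega'}{M}$, put $\beta_0:=\omega-\omega'-ML(C_\rho)>0$, and encode the data of \eqref{modello} as abstract data $U_0=(u_0,u_1)\in\W$ and $f\in C([-\overline\tau,0];\W)$, $f(s)=(0,Bg(s))$, so that $\|f(s)\|_\W\le b\,\|g(s)\|_{W_2}$; then the two smallness hypotheses translate into $\|U_0\|_\W\le\rho$, $\int_{-\overline\tau}^0|k(s)|\,\|f(s)\|_\W^2\,ds\le b^2\rho^2$ and $\max_{s\in[-\overline\tau,0]}\|f(s)\|_\W\le 2b^2\rho$, with $\rho>0$ still to be fixed. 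A unique local mild solution exists on a maximal interval $[0,T_{\max})$ with $\limsup_{t\to T_{\max}^-}\|U(t)\|_\W=+\infty$ whenever $T_{\max}<+\infty$: under \eqref{bound_below} this is Lemma \ref{lemma1} followed by step-by-step continuation along intervals of length $\tau_0$, and without \eqref{bound_below} it follows from a standard contraction on $C([0,T_1];\W)$ for $T_1$ small, available because $k\in L^1_{loc}$ makes $\int_0^{T_1}|k(s)|\,ds\to0$ as $T_1\to0$.

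The core is a continuity (bootstrap) argument. Set
$$T^*:=\sup\bigl\{\,T\in(0,T_{\max}]:\ \|U(t)\|_\W\le C_\rho\ \text{for all }t\in[0,T)\,\bigr\};$$
since $\|U(0)\|_\W=\|U_0\|_\W\le\rho\le C_\rho$ we have $T^*>0$, and we assume for contradiction that $T^*<+\infty$. On $[0,T^*)$ one has $\|F(U(t))\|_\W=\|F(U(t))-F(0)\|_\W\le L(C_\rho)\|U(t)\|_\W$, so the Duhamel formula and the Gronwall estimate used in the proof of Theorem \ref{generaleCV} may be reproduced on $[0,t]$ for every $t<T^*$; together with \eqref{assumption_delay} they give, exactly as there,
$$\|U(t)\|_\W\le Me^{\gamma}\Bigl(\|U_0\|_\W+e^{\omega\overline\tau}K\max_{s\in[-\overline\tau,0]}\|e^{\omega s}f(s)\|_\W\Bigr)e^{-\beta_0 t}\le C_1\rho,\qquad t\in[0,T^*),$$
where $C_1:=Me^{\gamma}\bigl(1+2b^2e^{\omega\overline\tau}K\bigr)$ and the second inequality uses $\beta_0>0$, $e^{\omega s}\le1$ for $s\le0$, and the bounds on $\|U_0\|_\W,\|f\|_\W$. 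Now fix $\rho>0$ so small that $C_1\rho<C_\rho$. Then $\sup_{[0,T^*)}\|U\|_\W<C_\rho$; hence $\|U\|_\W$ stays bounded on $[0,T^*)$, so $T^*<T_{\max}$, and by continuity $\|U(t)\|_\W<C_\rho$ on a neighbourhood of $T^*$ — contradicting the definition of $T^*$. Therefore $T^*=+\infty$: the solution is global, $\|U(t)\|_\W\le C_\rho$ for all $t\ge0$ (this is (I)), and the displayed estimate, i.e. \eqref{stimaesponenziale}, holds for all $t\ge0$. (One may alternatively feed the bootstrap with the a priori bound coming from the energy coercivity $E(t)>\frac14\|U(t)\|_\W^2$, valid for small data by Lemma \ref{lemmaspezzato} via Proposition \ref{proposition} and Lemma \ref{lemma2}; it is not needed here.) If one insists on working under \eqref{bound_below}, it is removed by approximating $\tau$ with $\tau_n:=\max\{\tau,1/n\}\ (\le\overline\tau)$, for which the above applies with $C_\rho,C_1,\beta_0$ independent of $n$, and letting $n\to\infty$ using $\tau_n\to\tau$ uniformly, the compactness of $\mathcal D(A^{1/2})\hookrightarrow H$, and (F2).

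It remains to pass from the decay of $\|U(t)\|_\W$ to that of $E(t)$. Writing $N:=Me^{\gamma}\bigl(\|U_0\|_\W+e^{\omega\overline\tau}K\max_s\|e^{\omega s}f(s)\|_\W\bigr)\ (\le C_1\rho)$, so that $\|U(t)\|_\W\le Ne^{-\beta_0 t}$, we estimate the four terms of $E(t)$ in \eqref{energy}: the kinetic-plus-potential part equals $\frac12\|U(t)\|_\W^2\le\frac12N^2e^{-2\beta_0 t}$; by \eqref{assumptionPsi} and $h(\|A^{1/2}u(t)\|_H)\le h(C_\rho)$ one gets $|\psi(u(t))|\le\frac12 h(C_\rho)N^2e^{-2\beta_0 t}$; and for the delay integral, when $t\ge\overline\tau$ we use $\|B^*u_t(s)\|_{W_2}^2\le b^2\|U(s)\|_\W^2\le b^2N^2e^{-2\beta_0(t-\overline\tau)}$ on $[t-\overline\tau,t]$ together with \eqref{damp_coeff} to bound it by $\frac12 Kb^2N^2e^{2\beta_0\overline\tau}e^{-2\beta_0 t}$, while for $0\le t\le\overline\tau$ every term is bounded by the data and hence by a constant, which is absorbed once $\tilde K$ is enlarged by $e^{2\beta_0\overline\tau}$. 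Summing these bounds yields $E(t)\le\tilde K e^{-\tilde\beta t}$ with $\tilde\beta=2\beta_0$ and $\tilde K$ depending on the data only through $N$ and $\int_{-\overline\tau}^0|k(s)|\|g(s)\|_{W_2}^2\,ds$.

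The one genuinely delicate point is the self-referential nature of (I): the decay estimate that forces $\|U(t)\|_\W\le C_\rho$ is precisely the one whose derivation presupposes $\|U(t)\|_\W\le C_\rho$. The bootstrap above resolves this, the decisive input being that the a priori bound it returns, $C_1\rho$, lies \emph{strictly} below $C_\rho$ once $\rho$ is small — which is exactly where the smallness of the initial data enters. (If one goes through Lemma \ref{lemma1} rather than the contraction, the other point requiring care is that all constants be independent of the lower bound $\tau_0$ in \eqref{bound_below}, which holds because none of \eqref{assumption_delay}, \eqref{damp_coeff}, (F1)--(F2) and \eqref{decay_semigroup} involves $\tau_0$.)
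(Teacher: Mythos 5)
Your proof is correct, but it follows a genuinely different route from the paper's. The paper verifies hypothesis (I) window by window: it uses the energy machinery of Section 2 (Proposition \ref{proposition} together with Lemmas \ref{lemma2} and \ref{lemmaspezzato}) to obtain the a priori bound $\|U(t)\|_{\W}\le 2\bar C^{1/2}(T)b\rho$ on a fixed interval $[0,T]$, applies Theorem \ref{generaleCV} there, and then iterates on $[T,2T],[2T,3T],\dots$, the iteration being closed by the tuned constant $C_T\le 1$ in \eqref{stimaN}; it then needs a separate approximation argument ($\tau_n=\tau+\epsilon_n$, Part II) to remove the technical lower bound \eqref{bound_below}. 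You instead close the loop in one stroke: a continuity/bootstrap argument on the maximal existence interval, feeding the Duhamel--Gronwall decay estimate of Theorem \ref{generaleCV} back into the set where $\|U\|_{\W}\le C_\rho$, with the strict inequality $C_1\rho<C_\rho$ doing the work; and you replace Lemma \ref{lemma1} by a direct contraction on $C([0,T_1];\W)$, which needs no positive lower bound on $\tau$ because the relevant Lipschitz constant is controlled by $Mb^2\int_0^{T_1}|k(s)|\,ds\to 0$, so Part II becomes unnecessary. Your version is shorter and dispenses with the energy lemmas entirely; what the paper's route buys in exchange is the coercivity estimate \eqref{stima E dal basso} (hence $E(t)>\frac14\|U(t)\|_{\W}^2$, a two-sided control of the energy that makes the decay of $E$ a genuine statement about the state and is of independent interest), which your argument recovers only implicitly through $h(C_\rho)<1$. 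Two routine points you should still spell out if this were written in full: the blow-up alternative for the delayed mild formulation (continuation with a uniform existence time, using absolute continuity of $\int|k|$), and the strong measurability of $s\mapsto \mathcal BU(s-\tau(s))$ when $\tau$ is merely continuous; neither causes any difficulty.
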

\begin{proof}
{ Part I)} First, we assume \eqref{bound_below}. This will allow us to prove the theorem by applying previous lemmas. Then, in the second part of the proof, we will remove this technical assumption proving the general result.
 
Let us fix a time $T>0$ sufficiently large,
such that
\begin{equation}\label{stimaN}
\begin{array}{l}
\displaystyle{C_T:=2M^2e^{2\gamma} \max \left\{b^2(1+K   b^2e^{\omega \overline\tau}), \frac  1 4 e^{\omega \overline\tau}\right\} \left(1+4K^2b^2e^{2\omega\overline \tau}\right) e^{-(\omega-\omega')T}\le 1.}
\end{array}
\end{equation}
Moreover, let $\rho>0$ be such that
$$
\rho\leq \frac{1}{2b{\bar C }^{\frac 12}(T)}h^{-1}\left( \frac 12\right),
$$
where $\bar C(\cdot)$ is the increasing function 
defined in \eqref{Cbar}.
Now, consider initial data such that
\begin{equation}\label{dis1}
||u_1||_H^2+||A^{\frac 12}u_0||_H^2 +\int_{-\overline\tau}^0 |k(s)| \cdot \Vert g(s)\Vert_{W_2}^2 ds \le \rho^2, 
\end{equation}
and
\begin{equation}\label{March16_1}
\max_{s\in [-\overline\tau,  0]}\Vert g(s)\Vert_H\le 2b\rho.
\end{equation}

We observe that this implies (by referring to the abstract formulation \eqref{abstract_form})
\begin{equation}\label{March16_7}
||U_0||_{\W}^2 +\int_{-\overline\tau}^{0} |k(s)| \cdot \Vert f(s)\Vert_{\mathcal H}^2 ds \le b^2\rho^2
\end{equation}
and 
\begin{equation}\label{March16_8}
\max_{s\in [-\overline\tau,  0]}\Vert f(s)\Vert_{\mathcal H}\le 2 b^2\rho.
\end{equation}
First of all, from Lemma \ref{lemma1} we know that there exists a local solution $u$ to \eqref{modello} on a time interval $[0,\delta).$  From our assumption on the initial data, we have that
$$
h(||A^{\frac 12} u_0||_H)\le h(\rho) \le h\left( \frac{1}{2b\bar C ^{\frac 12} (T)} h^{-1}\left( \frac 12\right)\right) <\frac 12,
$$
where we have used the fact that $b^2\bar C (T) >1$. 
 Hence, by Lemma \ref{lemma2}, $E(0)>0$. Moreover, from \eqref{assumptionPsi} we get
$$
\begin{array}{l}
\displaystyle{E(0)\leq \frac 12 ||u_1||_H^2+\frac 34 ||A^{\frac 12} u_0||_H^2+\frac 12 \int_{-\overline\tau}^0 \vert k(s)\vert \cdot \Vert g(s)\Vert_{W_2}^2 ds\le \rho^2,}
\end{array}
$$
which implies
\begin{equation}\label{March16_2}
h\left(2\bar C^{\frac 12} (T)bE^{\frac 12} (0)\right)< h\left(2b\bar C^{\frac 12} (T) \rho\right) <h\left(h^{-1}\left(\frac 12\right)\right) =\frac 12.
\end{equation}
Moreover, assumption 
\eqref{March16_1} implies
\begin{equation}\label{March16_3}
h\left(\bar C^{\frac 12} (T)\max_{s\in [-\overline t, 0]}\Vert g(s)\Vert_H\right)< h\left(2b\bar C^{\frac 12} (T) \rho\right) <h\left(h^{-1}\left(\frac 12\right)\right) =\frac 12.
\end{equation}
From \eqref{March16_2} and \eqref{March16_3}, recalling \eqref{funzionale_in_zero}, we deduce

\begin{equation}\label{March16_5}
h\left(2\bar C^{\frac 12} (T){\mathcal E}^{\frac 12} (0)\right)< \frac 12.
\end{equation}

Hence, we can apply Lemma \ref{lemmaspezzato} and we can infer that \eqref{J2} is satisfied for all $t\in[0,\delta)$. Then, we can use Proposition \ref{proposition} getting
\begin{equation}\label{29luglio}
\begin{array}{l}
\displaystyle{0<\frac 14 ||u_t(t)||_H^2+\frac 14 ||A^{\frac 12} u(t)||_H^2 }\\
\hspace{0.4 cm}
\displaystyle{+\frac 14  \int_{t-\overline\tau}^t |k(s)| \cdot ||B^*u_t(s)||_{W_2}^2 ds< E(t)\leq  \bar C(T){\mathcal E}(0),}
\end{array}
\end{equation}
for any $t\in [0,\delta)$. Then, we can extend the solution in $t=\delta$ and on the whole interval $[0, T]$.  
Now, for $t=T$ we have that
$$
\begin{array}{l}
\displaystyle{h(||A^{\frac 12} u(T)||_H) \le h(2E^{\frac 12} (T))}\\
\hspace{3 cm}
\displaystyle{\le h(2\bar C ^{\frac 12} (T){\mathcal E}^{\frac 12} (0))<\frac 12.}
\end{array}
$$
Moreover, from \eqref{funzionale_in_zero} and from the assumptions  \eqref{dis1} and \eqref{March16_1} on the initial data, we deduce
$$
\frac 14 ||U(t)||_{\W}^2\le E(t)\le \bar C (T ){\mathcal E(0)}<\bar C(T)b^2\rho^2, \quad \forall \ t \in [0,T],
$$
and so 
$$
||U(t)||_{\W}\le C_{b\rho}:= 2\bar C^{\frac 12} (T)b\rho,
$$
for any $t\in[0, T]$. Now, eventually choosing a smaller values of $\rho$, we suppose that $\rho$ is such that $L(C_{b\rho})<\frac{\omega-\omega'}{2M}$. Therefore, the well-posedness assumption $(I)$ of Theorem \ref{generaleCV} is satisfied in the interval $[0, T]$. Hence, from Theorem \ref{generaleCV} we deduce the following estimate:

\begin{equation}\label{1}
||U(t)||_{\W}\le Me^{\gamma}  \left ( \Vert U_0\Vert_{\mathcal H}+e^{\omega \overline\tau}K \max_{s\in[-\overline\tau, 0]}\left\{\Vert e^{\omega s} f(s)\Vert_{\mathcal H}\right\} 
 \right )e^{-\frac{\omega -\omega'}{2}t},
\end{equation}
for any $t\in [0, T]$.
Recalling \eqref{March16_8}, from \eqref{1} we get
\begin{equation}\label{March16_9}
||U(t)||_{\W}\le Me^{\gamma}  \left ( \Vert U_0\Vert_{\mathcal H}+e^{\omega \overline\tau}2Kb^2 \rho 
 \right )e^{-\frac{\omega -\omega'}{2}t},
\end{equation}
Therefore, from \eqref{March16_9}, recalling \eqref{March16_7}, we get
\begin{equation}\label{March16_10}
\begin{array}{l}
\displaystyle{
||U(t)||_{\mathcal H}^2\le 2M^2e^{2\gamma}\left( \Vert U_0\Vert_{\mathcal H}^2+4e^{2\omega \overline\tau}  K^2b^4\rho^2\right) e^{-(\omega-\omega')t}
}\\
\hspace{2 cm}\le \displaystyle{
2M^2e^{2\gamma}b^2\rho^2 \left (1+4K^2b^2e^{2\omega \overline\tau} \right )e^{-(\omega-\omega')t}, \quad \forall\ t\in [0, T].
}
\end{array}
\end{equation}
Moreover, from \eqref{March16_10}, we can deduce
\begin{equation}\label{March16_11}
\int_{T-\overline\tau}^T \vert k(s)\vert \cdot\Vert B^*u_t(s)\Vert_{W_2}^2 ds
\le 2M^2Ke^{2\gamma}e^{\omega\overline\tau}b^4\rho^2 \left (1+4K^2b^2e^{2\omega \overline\tau} \right )e^{-(\omega-\omega')T}.
\end{equation}
Then, from \eqref{March16_10} and \eqref{March16_11}, we obtain
 
$$
\begin{array}{l}
\displaystyle{||U(T)||_\W^2 + \int_{T-\overline\tau}^{T}  |k(s)|\cdot ||B^* u_t(s)||_{W_2}^2 ds\le  C_T\rho^2\leq \rho^2,}
\end{array}
$$
where we have used \eqref{stimaN}. 
We can also check that
$$\max_{s\in [T-\overline\tau, T]} \Vert B^*u_t(s)\Vert_{W_2}^2
\le  2M^2e^{2\gamma}e^{\omega\overline\tau}b^4\rho^2 \left (1+4K^2b^2e^{2\omega \overline\tau} \right )e^{-(\omega-\omega')T} \le 4 C_Tb^2\rho^2\le 4b^2\rho^2
$$
and so
$$\max_{s\in [T-\overline\tau, T]} \Vert B^*u_t(s)\Vert_{W_2}\le 2b\rho.$$
These last two conditions (cf. \eqref{March16_7} and \eqref{March16_8}) allow us 
to
proceed by applying  analogous arguments than before on the interval $[T, 2T],$ obtaining a solution on the interval $[0,2T]$. Iterating the process, we find a unique global solution to \eqref{abstract_form} satisfying the well-posedness assumption $(I)$. Hence, under the assumption \eqref{bound_below}, the theorem is proved. 

 {Part II)}  Now, we want to remove  the technical assumption  \eqref{bound_below}. So, let $\tau(t)$ a time delay function satisfying 
$$0\le\tau(t)\le\overline\tau.$$
Let $\{\epsilon_n\}_n$ be a sequence of positive constants such that $\epsilon_n\rightarrow 0$ as $n\rightarrow +\infty.$ We can suppose $\epsilon_n\le 1,$ for all $n\in \nat.$
Let us consider the time delay functions
\begin{equation}\label{taun}
\tau_n(t)=\tau(t)+\epsilon_n, \quad n\in \nat.
\end{equation}
From \eqref{taun}, we have
\begin{equation}\label{tauuniform}
\epsilon_n\le \tau_n(t)\le\overline\tau +1,\quad \forall t\in [0, +\infty),
\end{equation}
and then, in particular, $\tau_n(t)$ satisfies the assumption \eqref{bound_below}. Therefore, the model \eqref{abstract_form}, 
with the same (sufficiently small) initial data, and time delay $\tau_n(t),$ has a unique solution $U_n\in C([0,+\infty); {\mathcal H})$ satisfying the exponential decay estimate of Theorem \ref{generaleCV}.

Applying the Duhamel formula, we can write, $\forall\ n\in\nat,$
\begin{equation}\label{D1}
U_n(t)=S(t)U_0+\int_0^t S(t-s)[F(U_n(s))-k(s){\mathcal B}U_n(s-\tau_n(s))] ds
\end{equation}

Hence, from \eqref{D1}, $\forall\ i,j\in \nat,$ we have

\begin{equation}\label{D2}
\begin{array}{l}
\displaystyle{
U_i(t)-U_j(t)=\int_0^t S(t-s)[F(U_i(s)) -F(U_j(s))] ds}\\
\hspace{2cm}\displaystyle{
 -\int_0^tS(t-s)k(s) [{\mathcal B}U_i(s-\tau_i(s))- {\mathcal B}U_j(s-\tau_j(s))] ds}.
\end{array}
\end{equation}
From the local Lipschitz continuity of $F$ (note that the functions $U_n$ are uniformly bounded in ${\mathcal H}$) and the estimate \eqref{decay_semigroup} on the semigroup
$\left\{S(t)\right\}_{t\ge 0},$ we deduce
\begin{equation}\label{D3}
\begin{array}{l}
\displaystyle{
\Vert U_i(t)-U_j(t)\Vert_{\mathcal H}\le Ce^{-\omega t}\left\{\int_0^t e^{\omega s} \Vert U_i(s) -U_j(s)\Vert_{\mathcal H} ds\right.}\\
\hspace{2cm}\displaystyle{
 \int_0^te^{\omega s}\vert k(s)\vert  \cdot \Vert U_i(s-\tau_i(s))- U_j(s-\tau_i(s))\Vert_{\mathcal H} ds}\\
\hspace{2cm}\displaystyle{\left.
 +\int_0^t e^{\omega s} \vert k(s)\vert \cdot \Vert U_j(s-\tau_i(s))- U_j(s-\tau_j(s))\Vert_{\mathcal H} ds\right\}}\\
\hspace{4 cm}\displaystyle{=Ce^{-\omega t}\{I_1+I_2+I_3\}}.
\end{array}
\end{equation}
Now, observe that, since $U_j\in C([0, +\infty; {\mathcal H})$ is locally uniformly continuous, $\forall \ j\in\nat,$ and
$\tau_i(t)-\tau_j(t)= \epsilon_i-\epsilon_j, \forall i,j\in\nat,$
then, for every fixed time $T>0,$ we can estimate
$$I_3\le {\tilde C}(T; \vert \epsilon_i-\epsilon_j\vert),$$
where the constant $\tilde C(T; \vert \epsilon_i-\epsilon_j\vert)$ tends to zero as $\epsilon_i-\epsilon_j\rightarrow 0.$

Using this last estimate in \eqref{D3}, we obtain  
\begin{equation}\label{D4}
\begin{array}{l}
\displaystyle{
\Vert U_i(t)-U_j(t)\Vert_{\mathcal H}\le Ce^{-\omega t}\Big \{\int_0^t e^{\omega s} \Vert U_i(s) -U_j(s)\Vert_{\mathcal H} ds}\\
\hspace{2cm}\displaystyle{
 \int_0^te^{\omega s}\vert k(s)\vert  \cdot \Vert U_i(s-\tau_i(s))- U_j(s-\tau_i(s))\Vert_{\mathcal H} ds\Big\}}\\
\hspace{3cm}\displaystyle{
 +C e^{-\omega t}\tilde C(T; \vert \epsilon_i-\epsilon_j\vert)}.
\end{array}
\end{equation}
Then, for each fixed $T>0,$ combining arguments as in the proof of Theorem \ref{generaleCV} to deal with the delayed arguments and the use of  Gronwall's Lemma, we conclude that for $n\rightarrow +\infty$ ($\epsilon_n\rightarrow 0$) the functions $U_n(\cdot)$ converge locally uniformly to a function $U\in C([0,+\infty); {\mathcal H})$ which is a solution of \eqref{abstract_form}. Moreover, since the time delay functions $\tau_n(\cdot)$ are uniformly bounded from above (see \eqref{tauuniform}), all the estimates on the functions $U_n$ are uniform with respect to $n\in\nat.$ 

Then, 
$U$
satisfies the well-posedness assumption of Theorem \ref{generaleCV} and so the exponential estimate \eqref{stimaesponenziale}. \end{proof}

\section{The semilinear damped wave equation}
\label{Examples}\hspace{5mm}

\setcounter{equation}{0}
In this section we consider, as  a concrete example for which previous abstract well-posedness and stability results hold, the damped wave equation with a source term. 
Let $\Omega$ be an open bounded subset of $\RR^d$, with boundary $\partial\Omega$ of class $C^2,$ and let $\mathcal O \subset \Omega$ be an open subset which satisfies the geometrical control property in \cite{Bardos}. 
For instance, $\mathcal O \subset \Omega$ can be a neighborhood of  the whole boundary $\partial\Omega$ or, denoting by $m$ the standard multiplier $m(x)=x-x_0,$ $x_0\in \RR^d,$ as in \cite{Lions}, $\mathcal O$ can be the intersection of $\Omega$ with an open neighborhood of the set 
$$\Gamma_0=\left\{ \,x\in\Gamma\ :\ m(x)\cdot \nu(x)>0\,\right\}.$$
 Moreover, let $\tilde{\mathcal{O}}\subset\Omega$ be another open subset. Denoting by $\chi_ {\mathcal O}$ and $\chi_ {\tilde {\mathcal O}}$  the characteristic functions of the sets ${\mathcal O}$ and $\tilde{{\mathcal O}}$ respectively,  we consider the following wave equation
\begin{equation}\label{wave}
\begin{array}{l}
\displaystyle{u_{tt}(x,t)-\Delta u(x,t)+a\chi_{\mathcal{O}}(x) u_t(x,t)+k(t)\chi_{\tilde{\mathcal O}}(x)u_t(x,t-\tau(t))}\\
\hspace{7 cm}
\displaystyle{=u(x,t)|u(x,t)|^\beta, \quad (x,t)\in\Omega\times (0,+\infty),}\\
\displaystyle{u(x,t)=0, \quad (x,t)\in\partial\Omega\times (0,+\infty),}\\
\displaystyle{u(x,0)=u_0(x), \quad u_t(x,0)=u_1(x), \qquad x\in\Omega,}\\
\displaystyle{u_t(x,s)=g(x,s) \quad (x,s)\in {\Omega}\times [-\overline\tau,0],}
\end{array}
\end{equation}
where $a$ is a positive constant, $\tau(t)$ is the time delay function satisfying $0\le\tau(t)\le\overline\tau,$ $\beta >0,$ and the delayed damping coefficient $k(\cdot):[-\overline\tau,+\infty)\to (0,+\infty)$ is a $L^1_{loc}([-\overline\tau ,+\infty))$ function satisfying \eqref{damp_coeff}. The system \eqref{wave} can be rewritten  in the form \eqref{modello} with
$H=L^2(\Omega),$ and the operator $A: {\mathcal D}(A)\rightarrow H$ defined as
$A=-\Delta$ with domain ${\mathcal D}(A)=H^2(\Omega)\cap H_0^1(\Omega)$. 
The operator $A$ is positive, self-adjoint, with dense domain and compact inverse in $H.$
We then define $W_1:=L^2({\mathcal O}),  W_2:=L^2(\tilde{\mathcal O}),$ and the operators 
$$C:W_1\rightarrow H: \quad v\rightarrow \sqrt{a} \tilde v\chi_ {\mathcal O},$$
$$B:W_2\rightarrow H: \quad v\rightarrow \tilde v\chi_ {\tilde{\mathcal O}},$$
where $\tilde v$
is the extension of v by zero outside  ${\mathcal O}$ and ${\tilde{\mathcal O}}$ respectively.
Denoting $v(t)=u_t(t)$ and $U(t)=(u(t),v(t))^T,$ for any $t\ge 0,$, we can rewrite system \eqref{wave} in the abstract form \eqref{abstract_form}, with ${\mathcal H}=H_0^1(\Omega)\times L^2(\Omega)$,
$$
\mathcal A=\begin{pmatrix}
0 & Id \\
\Delta & -a\chi_{\mathcal O}
\end{pmatrix}
$$
and $\mathcal B$ and $F$ defined as
$$
\mathcal B \begin{pmatrix} u \\ v \end{pmatrix} = \begin{pmatrix} 0 \\ -\chi_{\tilde{\mathcal{O}}} v \end{pmatrix}, \qquad F(U(t))= \begin{pmatrix} 0 \\  u(t)|u(t)|^\beta \end{pmatrix}, \quad \forall\ t \geq 0.
$$
We know that $\mathcal{A}$ generates an exponentially stable $C_0$-semigroup $\{S(t)\}_{t\geq 0}$ (see e.g. \cite{K}), namely there exist $\omega,M>0$ such that
$$
||S(t)||_{\mathcal{L}(\mathcal H)} \leq Me^{-\omega t}, \quad \forall \ t\geq 0.
$$
Moreover, we consider the following functional:
$$\psi(u)=\frac{1}{\beta+2}\int_{\Omega} |u(x)|^{\beta+2} dx,$$
for any $u\in H_0^1(\Omega)$. For each $\beta\in \left( 0, \frac{4}{d-2}\right ]$, $\psi$ is well-defined by Sobolev's embedding theorem. Furthermore, $\psi$ is G\^ateaux differentiable for any $u\in H_0^1(\Omega)$ with G\^ateaux derivative given by
$$
D\psi(u)(v)=\int_{\Omega} |u(x)|^\beta u(x)v(x)dx, \quad \forall \ v\in H_0^1(\Omega).
$$
As in \cite{ACS}, it is possible to show that, for $\beta \in \left( 0, \frac{2}{d-2}\right]$, the functional   $\psi$ satisfies the assumptions (H1), (H2) and (H3).

We define the following energy functional:

$$E(t):= \frac 12 \int_\Omega |u_t(x,t)|^2 dx+\frac 12 \int_{\Omega} |\nabla u(x,t)|^2 dx-\psi(u(x,t))+\frac 12 \int_{t-\overline\tau}^t\int_{\tilde{\mathcal O}} |k(s)|\cdot |u_t(x,s)|^2 dx ds,
$$

as well as the functional

$$
{\mathcal E}(t):=\max \left \{  \frac 14 \max_{s\in [-\overline\tau,0] }\int_{\tilde {\mathcal O}} \vert g(x,s)\vert^2 \,ds, \ a
\max_{s\in [0, t] }E(s)  \right \}.
$$
Then, Theorem \ref{generaleCV} can be applied to system \eqref{wave}, under the assumption corresponding to \eqref{assumption_delay}, 
obtaining the well-posedness and exponential stability results for small initial data.

\begin{Remark}\label{Plate}{\rm
As another example we could consider the damped plate equation with source term (see e.g. \cite{Mustafa, PP_DCDSS} for the model details). The analysis is analogous to the wave case above.
Then, under suitable assumptions, the well-posedness and exponential stability results, for small initial data, hold for that model.}
\end{Remark}

\bigskip

\noindent {\bf Acknowledgements.} I would like to thank GNAMPA and UNIVAQ  for the support.

\end{document}